\newtheorem*{theorem}{Theorem}
\theoremstyle{remark}
\renewcommand{\d}{{\mathrm d}}
\newcommand{\lcm}{\operatorname{lcm}}
\newcommand{\dilog}{\operatorname{dilog}}
\begin{document}

\title[Automatic Discovery of Irrationality Proofs]{Automatic Discovery of Irrationality Proofs and Irrationality Measures}

\date{20 December 2019}

\author{Doron Zeilberger}
\address{Department of Mathematics, Rutgers University (New Brunswick), Hill Center-Busch Campus, 110 Frelinghuysen Rd., Piscataway, NJ 08854-8019, USA}
\email{doronzeil@gmail.com}

\author{Wadim Zudilin}
\address{Department of Mathematics, IMAPP, Radboud University, PO Box 9010, 6500~GL Nijmegen, Netherlands}
\email{w.zudilin@math.ru.nl}

\subjclass[2010]{Primary 11J71, 11J82; Secondary 11Y60, 33F10}
\keywords{irrationality measure; experimental mathematics; Almkvist--Zeilberger algorithm; Wilf--Zeilberger algorithmic proof theory.}

\dedicatory{Dedicated to Bruce Berndt (b.\ March 13, 1939), on his $80\frac{3}{4}$-th birthday}

\begin{abstract}
We illustrate the power of {\it Experimental Mathematics} and {\it Symbolic Computation} to
suggest irrationality proofs of natural constants, and the determination of their irrationality measures.
Sometimes such proofs can be fully automated, but sometimes there is still need for a human touch.
\end{abstract}

\maketitle

\subsection*{The Maple packages}
This article is accompanied by  {\bf four} Maple packages:
\begin{itemize}
\item[$\bullet$] \texttt{ALLADI.txt}: a Maple package, inspired by the article \cite{AR80}.
It does an automated redux of Theorem 1 in their paper, and extends their results to proving irrationality, and
finding irrationality measures, of constants of the form $\int_0^1 \, 1/P(x) \, \d x$, where $P(x)$ is a
quadratic polynomial with integer coefficients.
\item[$\bullet$] \texttt{GAT.txt}: a Maple package that includes the former case, but generalizes it to integrals of the form 
$\int_0^1 \, \frac{1}{a+x^k} \, \d x $, where $a$ and $k$ are positive integers.
\item[$\bullet$] \texttt{BEUKERS.txt}: a Maple package for getting integer linear combinations of $1$, $\operatorname{DiLog}(a/(a-1)))$, and $\operatorname{Log}(a/(a-1))$ that are very small,
for integers $a \geq 2$.
\item[$\bullet$] \texttt{SALIKHOV.txt}: a Maple package that generalizes V. Kh. Salikhov's method \cite{Sa07} 
to discover and prove linear independence measure of $\{1, \log(a/(a+\nobreak1)),\allowbreak \log(b/(b+1)) \}$ for most pairs of integers $2 \leq  a < b$,
in particular for $\{1, \allowbreak \log(a/(a+1)), \log((a+1)/(a+2)) \}$ for all $a \geq 1$.
\end{itemize}
They are available, along with numerous output files, in the form of computer-generated articles, from the following url:\\
\url{http://www.math.rutgers.edu/~zeilberg/mamarim/mamarimhtml/gat.html}.

\subsection*{Preface: Roger Ap\'ery's Astounding Proof (and Bruce Berndt's Seminar Talk) and the Almkvist--Zeilberger algorithm}
In 1978, 64-year-old Roger Ap\'ery, announced, and sketched, an amazing proof
that $\zeta(3)=\sum_{n=1}^{\infty} \frac{1}{n^3}$ is an irrational number.
Some of the details were filled in by Henri Cohen and Don Zagier, and the completed proof
was the subject of Alf van der Poorten's classic~\cite{vdP79}.

One of us (DZ) first learned about this proof from an excellent talk by
Bruce Berndt, delivered at the University of Illinois, way back in Fall 1979
(when DZ had his third postdoc).
Hence it is appropriate that we dedicate the present paper to Bruce Berndt, since
it deals with irrationality of constants inspired by Ap\'ery's seminal proof,
exposited so lucidly by Bruce Berndt. 

Another {\it leitmotif} of the present paper is the Almkvist--Zeilberger algorithm. 
Gert Almkvist and Bruce Berndt co-authored a classic expository paper \cite{AB88} that won a
prestigious MAA Lester Ford award in 1988. Since Gert Almkvist (1934--2018) was also a good friend,
and long-time collaborator, of the second-named author (WZ), and both authors are good friends
and admirers of Bruce Berndt, it is more than fitting to dedicate this article to Bruce Berndt.

\subsection*{Ap\'ery's proof of the Irrationality of $\zeta(3)$}
Roger Ap\'ery (see \cite{vdP79}) {\it pulled out of a hat} two  {\bf explicit} sequences of rational numbers. 
The first sequence consisted only of integers,
$$
b_n \, := \, \sum_{k=0}^{n} {{n} \choose {k}}^2 \,{{n+k} \choose {k}}^2 ,
$$
while the second one was a sequence of rational numbers,
$$
a_n \, := \, \sum_{k=0}^{n} {{n} \choose {k}}^2 \,{{n+k} \choose {k}}^2 \left ( 
\sum_{m=1}^{n}  \frac{1}{m^3} \, + \,  \sum_{m=1}^{k} \frac{(-1)^{m-1}}{2\,m^3\,{{n} \choose {m}} {{n+m}\choose {m}} }
\right ).
$$
It was easy to check that $p_n:=\lcm(1,\dots,n)^3 a_n$ are integers and, of course,
$q_n:=\lcm(1,\dots, n)^3 b_n$ are integers. Then he came up with
a real number $\delta=\frac{4\,\log(1+\sqrt{2}) -3}{4\,\log(1+\sqrt{2}) +3}=0.080259\hdots>0$ such that
for some constant $C$ (independent of $n$)
$$
\biggl|\zeta(3) - \frac{p_n}{q_n}\biggr| \, \leq \, \frac{C}{q_n^{1+\delta}} .
$$
Once all the claims are verified, it follows that $\zeta(3)$ is irrational.
Indeed, if $\zeta(3)$ would have been rational with denominator $c$, the left side would have been bounded below by $\frac{1}{cq_n}$.
It also follows that an {\it irrationality measure} (see \cite{vdP79}) is bounded above by $1+\frac{1}{\delta}=12.417820\dots$\,.

\subsection*{Frits Beukers' Version}

Shortly after, Frits Beukers \cite{Be79} gave a much simpler rendition of Ap\'ery's construction by introducing a certain explicit triple integral
$$
I(n)= \int_0^1 \!\!  \int_0^1 \!\!  \int_0^1
\left ( \frac{x(1-x)y(1-y)z(1-z)}{1-(1-xy)z } \right )^n \frac{\d x \, \d y \, \d z }{1-(1-xy)z},
$$
and pointing out that
\begin{itemize}
\item[(i)] $I(n)$ is small and can be explicitly bounded: $I(n)=O((\sqrt2-1)^{4n})$;
\item[(ii)] $I(n)=A(n) +B(n) \zeta(3)$ for certain sequences of rational numbers $A(n)$, $B(n)$, that can be explicitly bounded; and
\item[(iii)] $A(n)\,\lcm(1, \dots, n)^3$ and  $B(n)\, \lcm(1, \dots, n)^3$ are
integers. 
\end{itemize}
Since, thanks to the Prime Number Theorem $\lcm(1,\dots, n)$ is $\Omega(e^n)$, everything followed.

(We use the convenient notation $F(n)=\Omega(c^n)$ meaning $\lim_{n \rightarrow \infty} (\log F(n))/n \nobreak= c $.)

Shortly after, Krishna Alladi and M.\,L.~Robinson, used one-dimensional analogs to reprove the irrationality of $\log 2$,
and established an upper bound for its irrationality measure of $4.63$ (subsequently improved, see \cite{Wei}) by considering the simple integral
$$
I(n)= \int_0^1 \left ( \frac{x(1-x)}{1+x} \right )^n \, \frac{\d x}{1+x} .
$$

\subsection*{Man muss immer umkehren}

Carl Gustav Jacob Jacobi told us that {\it one must always invert}. Of course, he meant that if you have a complicated
looking function like $\int_{0}^{x} \frac{1}{\sqrt{1-t^2}} \, \d t$ its inverse function, in this case $\sin x$,
is much more user-friendly. We  understand Jacobi's quip in a different way.
Rather than start with a famous constant, say $\zeta(3)$ or $\log 2$, and wreck
our brains trying to find Beukers-like or Alladi--Robinson-like integrals that would produce good diophantine
approximations, start with a family of integrals $I(n)$,
and see 
\begin{itemize}
\item[$\bullet$] whether $I(0)$ is a familiar constant, let's call it $x$;
\item[$\bullet$] whether $I(n)$, for integers  $n>0$, can be written as $A(n)-B(n)x$, with $\{A(n)\}$ and $\{B(n)\}$ sequences of rational numbers;
\item[$\bullet$] whether $I(n)$ has exponential decay, i.e.\ is `small';
\end{itemize}
Since $A(n)-B(n)x$ is so close to $0$, $\frac{A(n)}{B(n)}$ is very close to $x$. 
Write $\frac{A(n)}{B(n)}$ as  $\frac{A'(n)}{B'(n)}$, where now $A'(n)$ and $B'(n)$ are \emph{integers}.
How close is $A(n)/B(n)=A'(n)/B'(n)$ to~$x$, from a {\it diophantine} (rather than numerical-analysis) point of view?

We are looking for what we call `empirical delta', let's call it $\delta(n)$, such that
$$
\biggl|x- \frac{A'(n)}{B'(n)}\biggr| = \frac{1}{B'(n)^{1+\delta(n)}} ;
$$
in other words, we define
$$
\delta(n) := \frac{-\log \bigl|x- \frac{A'(n)}{B'(n)}\bigr|}{\log B'(n)} -1 .
$$
If the values of $\delta(n)$ for, say $990 \leq n \leq 1000$ are all strictly positive, and safely not {\it too} close to $0$, then we
can be assured that there {\it exists} a proof of irrationality, and a corresponding rigorous upper bound for its irrationality measure.

But being sure that a proof {\it exists} is not the same as having one. It would be nice to have a fully rigorous proof.
First, try to find one yourself, and you are welcome to get help from your computer, that excels not only in
{\it number-crunching}, but also in {\it symbol-crunching}, but is still not so good in {\it idea-crunching}
(although it is getting better and better!).

If you are stuck, you can always email an expert number-theorist and offer him or her to collaborate with you on the paper\\
\centerline{``Proof of the irrationality of $x$".}\\
If $x$ was not yet proved to be irrational,
and furthermore is sufficiently famous (e.g.\ Euler's constant, $\gamma$, or Catalan's constant, $G$) you and your collaborator
would be famous too (that what happened to Ap\'ery). If the constant in question is complicated and obscure,
it is still publishable, at least in the \href{http://arxiv.org}{\texttt{arXiv}}.
If $x$ is already proved to be irrational, and there is currently a proved upper bound for
the irrationality measure of $x$ and the implied (rigorous) bound from your sequence $A(n)/B(n)$  is better
(i.e.\ smaller) than the previous one, you (and your expert collaborator) can write a paper\\
\centerline{``A new upper bound for the irrationality measure of $x$",}\\
and you (and your expert collaborator) would be known as the current holders of the world record of the irrationality measure of $x$,
until someone else, in turn, would break your record.

In this paper we will show how, using the amazing Almkvist--Zeilberger algorithm \cite{AZ90} that finds (and at the same time, proves!)
a linear recurrence equation with polynomial coefficients for such integrals $I(n)$, one can accomplish the first four steps (that we call {\it reconnaissance})
very fast and very efficiently, and sometimes (but not always!) the last `rigorous', step, can also be
automated.

\subsection*{Warm Up: Computerized irrationality proof of $\log 2$}

Consider the sequence of definite integrals
$$
I(n) := \int_0^1 \left (\frac{x(1-x)}{1+x}\right )^n \, \frac{\d x}{1+x} .
$$

The {\bf Almkvist--Zeilberger} algorithm \cite{AZ90} 
produces a linear recurrence equation with polynomial coefficients satisfied by $I(n)$:
$$
\left( n+2 \right) I \left( n \right) + \left( -6\,n-9 \right) I \left( n+1 \right) + \left( n+1 \right) I \left( n+2 \right) \, = \, 0 .
$$
From this we can compute, very fast, many values, and find out the `empirical deltas'. For example,
\begin{align*}
I(50)
&=
-{\frac{1827083538922494024488153994990786998947102154393958429773}{172169139124777594800}}
\\ &\qquad
+15310086199495855930932559804210504653\,\ln  \left( 2 \right) \quad.
\end{align*}
This implies the rational approximation to $\log 2$  (by `pretending' that $I(50)$ is zero) of
$$
\frac{1827083538922494024488153994990786998947102154393958429773}{2635924360893339481850468164186010894239049450495548604400} ,
$$
whose empirical delta is
$$
 0.33269846131126944438\dots \quad .
$$
This is encouraging!  But we can't judge  from just one data point. We next find that
the `empirical deltas' for $n=51$ and $n=53$ are $0.31992792581569268673\dots$ and  $0.30031107795443952791\dots$, respectively.
To get more confidence, we need to go to higher values of $n$.
The lowest empirical delta between $n=990$ and $n=1000$ turns out to be
$$
0.28193333613008344616\dots \,  ,
$$
that is not as good, but is way above $0$.
It leads to an estimate for the irrationality measure of $4.5469377751717949058$\,. This trend persists, so we can be
convinced that the integral $I(n)$ is promising. But this is {\bf not} yet a rigorous proof.

Writing  $I(n)=A(n)+B(n) \log(2)$, the next step is to (automatically!) find the rate of growth of $A(n)$ and $B(n)$.
The original proof in \cite{AR80} used partial fractions, and  the saddle-point method, but thanks to the
Poincar\'e lemma (see \cite{vdP79}) we can do it very fast and automatically.

Note that $A(n)$ and $B(n)$ satisfy the same recurrence. In other words,
\begin{gather*}
\left( n+2 \right) A \left( n \right) + \left( -6\,n-9 \right) A \left( n+1 \right) + \left( n+1 \right) A \left( n+2 \right) \, = \, 0 ,
\\
\left( n+2 \right) B \left( n \right) + \left( -6\,n-9 \right) B \left( n+1 \right) + \left( n+1 \right) B \left( n+2 \right) \, = \, 0 .
\end{gather*}
The `constant-coefficient approximation' of the above recurrence is (taking the leading coefficient in $n$, that happens to be $n^1=n$)
$$
 \bar{A} \left( n \right) -6 \bar{A} \left( n+1 \right) +  \bar{A} \left( n+2 \right) \, = \, 0 \quad ,
$$
where $\bar{A}(n)$ is an approximation to $A(n)$ that Poincar\'e proved has the property that 
$$
\lim_{n \rightarrow \infty} \frac{\log \bar{A}(n)}{\log A(n)} \, = \, 1 ,
$$
and similarly for $B(n)$ and $\bar{B}(n)$. The {\it indicial} equation of this constant-coefficient linear recurrence is
$$
1-6N+N^2 =0 ,
$$
whose roots are
$$
a=3+2\,\sqrt {2} , \quad b=3-2\,\sqrt {2} .
$$
It follows that $|A(n)|,|B(n)|=\Omega(a^n)$  and that $|I(n)|=\Omega(b^n)$, since $I(n)$ is obviously the sub-dominant solution, of exponential {\bf decay}. 
There is only one problem, $A(n)$ is not an integer. The computer can easily check, empirically that
$A'(n):=\lcm(1,\dots,n)A(n)$ is an integer for $1 \leq n \leq 1000$, and then try to prove it in general (or get a little help
from a human friend). Then defining
$$
A'(n)= \lcm(1, \dots ,n) A(n) , \quad
B'(n)= \lcm(1,\dots ,n) B(n) , \quad
I'(n)= \lcm(1,\dots, n) I(n) ,
$$
we have that $A'(n),B'(n)$ are integers. By the  Prime Number Theorem, $\lcm(1, \dots, n)=\Omega(e^n)$, hence
$$
|A'(n)|, \; B'(n) = \Omega(a^n \, e^n) , \quad  |I'(n)|= \Omega(b^n \, e^n) .
$$
Since we want
$$
|I'(n)|= \frac{1}{\Omega(B'(n)^{\delta})} ,
$$
we can take
$$
\delta = \frac{-\log (be)}{\log(ae)}\, =\,
-\frac{\log(b)+1}{\log(a)+1}  = 
\frac{\log  \left( 3+2\,\sqrt {2} \right) -1}
{\log  \left( 3+2\,\sqrt {2} \right) +1} ,
$$
leading to the Alladi--Robinson upper bound  of $1+1/\delta$ that equals
$$
2\,{\frac {\ln  \left( 3+2\,\sqrt {2} \right) }{\ln  \left( 3+2\,\sqrt {2} \right) -1}} \, = \,
 4.6221008324542313342\dots \,.
$$
This has been improved several times \cite{Wei}, first by Ekaterina Rukhadze \cite{Ru87} (see also \cite{Zu04}); the current record of $3.57455391$ is due to Raffaele Marcovecchio \cite{Ma09}.

The novelty of our approach is that it can be taught to a computer, and everything, except possibly proving the
`divisibility lemma' (that in this case is extremely simple, but in other cases is not so simple).

Using this method, our computer, Shalosh B.~Ekhad, proved {\it ab initio}, {\it all by itself} (except the
simple divisibility lemma) Theorem~1 of \cite{AR80}. Note that nowhere did we mention {\it Legendre polynomials}
(they turned, in hindsight,  to be unnecessary). Furthermore, our approach is {\it streamlined}, and
the formulation of the theorem is more explicit.

\begin{theorem}[Alladi--Robinson \cite{AR80}, but with a more explicit formulation]
Let $a$ and $b$ be positive integers such that $a>(b- e^{-1})^2/4$, then $\log(1+b/a)$ is an irrational number
with an irrationality measure that is at most  

$$
\frac {\ln  \bigl( 2\,a+b-2\,\sqrt {a ( a+b ) }\, \bigr) -\ln  \bigl( 2\,a+b+2\,\sqrt {a ( a+b ) } \,\bigr) }
{\ln  \bigl( 2\,a+b-2\,\sqrt {a ( a+b ) } \,\bigr) +1}
$$
\end{theorem}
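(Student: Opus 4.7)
The plan is to run the exact same machinery as the $\log 2$ warm-up, with $a$ and $b$ replacing the two ``$1$''s. Specifically, I will study the one-parameter family
$$
I(n) \;:=\; \int_0^1 \left(\frac{x(1-x)}{a+bx}\right)^{n}\frac{\d x}{a+bx},
$$
whose $n=0$ value is $\tfrac{1}{b}\log(1+b/a)$. First, the substitution $u=a+bx$ followed by expanding $(x(1-x))^n$ as a polynomial in $u$ produces a rational decomposition $I(n) = A(n) + B(n)\log(1+b/a)$ with $A(n),B(n)\in\mathbb{Q}$. Second, the Almkvist--Zeilberger algorithm applied to the integrand produces (and certifies) a second-order linear recurrence in $n$ with polynomial coefficients that is satisfied by $I(n)$, and hence, by linearity, also by $A(n)$ and $B(n)$.

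Third, I read off the indicial equation of the constant-coefficient approximation of that recurrence; its two roots turn out to be
$$
\alpha = 2a+b+2\sqrt{a(a+b)} = \bigl(\sqrt{a+b}+\sqrt a\,\bigr)^2, \quad \beta = 2a+b-2\sqrt{a(a+b)} = \bigl(\sqrt{a+b}-\sqrt a\,\bigr)^2,
$$
satisfying $\alpha\beta=b^2$. By Poincar\'e's theorem, $|B(n)|^{1/n}\to\alpha$ (dominant solution), while $|I(n)|^{1/n}\to\beta$ (subdominant solution, matching the saddle-point estimate for the integral at the interior critical point $x_\ast=(\sqrt{a(a+b)}-a)/b$). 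Fourth, I establish the \emph{divisibility lemma}: $A'(n) := \lcm(1,\dots,n)\,A(n)$ and $B'(n) := \lcm(1,\dots,n)\,B(n)$ are integers. After the computer checks this empirically for thousands of values of $n$, a short $p$-adic valuation argument applied to the explicit partial-fraction coefficients from step one promotes it to a proof for all $n$.

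With the divisibility lemma in hand, the Prime Number Theorem $\lcm(1,\dots,n)^{1/n}\to e$ upgrades the growth rates to $|B'(n)|^{1/n}\to\alpha e$ and $|I'(n)|^{1/n}\to\beta e$, where $I'(n)=A'(n)+B'(n)\log(1+b/a)$. The hypothesis $a>(b-e^{-1})^{2}/4$ is calibrated so that $\beta e < 1$, which forces $I'(n)\to 0$; this rules out the rationality of $\log(1+b/a)$ and yields a positive effective exponent
$$
\delta \;=\; \frac{-\log(\beta e)}{\log(\alpha e)} \;=\; -\,\frac{\log\beta+1}{\log\alpha+1},
$$
so that the resulting upper bound $1+1/\delta$ for the irrationality measure is exactly the expression displayed in the theorem. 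The main obstacle is the divisibility lemma: Almkvist--Zeilberger dispatches the recurrence and Poincar\'e together with the Prime Number Theorem dispatch the asymptotics, but upgrading the computer's empirical verification of integrality to a rigorous statement still requires a little human $p$-adic bookkeeping, which is precisely the spot where ``idea-crunching'' is still needed.
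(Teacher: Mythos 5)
Your strategy is exactly the one the paper automates: the paper's ``proof'' is a computer-generated file produced by \texttt{ALLADI.txt}, which runs precisely this generalization of the $\log 2$ warm-up, and your final formula for $\delta$ does reproduce the stated bound. However, for $b\ge 2$ two of your intermediate claims are false, and you reach the right answer only because they cancel. First, the roots of the indicial equation attached to your $I(n)$ are not $\alpha$ and $\beta$ but $\alpha/b^{2}$ and $\beta/b^{2}$: the maximum of $x(1-x)/(a+bx)$ on $[0,1]$, attained at $x_{*}=(\sqrt{a(a+b)}-a)/b$, equals $\bigl(\sqrt{a+b}-\sqrt a\,\bigr)^{2}/b^{2}=\beta/b^{2}$, and likewise the second critical point gives $\alpha/b^{2}$. (The warm-up's $N^{2}-6N+1$ with roots $3\pm2\sqrt2$ is consistent only because $b=1$ there.) Second, the divisibility lemma as you state it is false for $b\ge2$: the substitution $u=a+bx$ gives
$$
I(n)=\frac{1}{b^{2n+1}}\int_a^{a+b}\frac{(u-a)^n(a+b-u)^n}{u^{n+1}}\,\d u,
\qquad
B(n)=\frac{1}{b^{2n+1}}\sum_{i=0}^n\binom{n}{i}^{2}a^{\,n-i}(a+b)^i,
$$
so $\lcm(1,\dots,n)B(n)$ is not an integer; the correct multiplier is $b^{2n+1}\lcm(1,\dots,n)$, and one needs the Legendre-polynomial form of the decomposition (rather than the naive term-by-term integration, which appears to introduce denominators $a^{n}(a+b)^{n}$ into $A(n)$) to see that this multiplier also clears $A(n)$. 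Since $b^{2n+1}\lcm(1,\dots,n)$ grows like $(b^{2}e)^{n}$ and the true characteristic roots carry the compensating factor $b^{-2}$, your limits $|B'(n)|^{1/n}\to\alpha e$ and $|I'(n)|^{1/n}\to\beta e$ are correct in the end, but the argument as written does not establish them.

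A second point you assert rather than verify: that the hypothesis $a>(b-e^{-1})^{2}/4$ is ``calibrated so that $\beta e<1$.'' It is not. Solving $\beta=2a+b-2\sqrt{a(a+b)}<e^{-1}$ gives $a>e\,(b-e^{-1})^{2}/4$, with an extra factor of $e$; for instance $a=1$, $b=2$ satisfies the stated hypothesis while $\beta=4-2\sqrt3>e^{-1}$, and then $\ln\beta+1>0$ and the displayed bound is negative, hence vacuous. So either the theorem's hypothesis is misstated or your calibration claim is wrong; in either case your proof genuinely needs $\beta<e^{-1}$ at the step where you conclude $I'(n)\to0$, and you must check that the hypothesis you are given actually delivers it.
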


\begin{proof}[Computer-generated proof]
See\\
\url{http://sites.math.rutgers.edu/~zeilberg/tokhniot/oALLADI1.txt}.
\end{proof}

The above computer-generated paper was produced with the Maple package\\ \texttt{ALLADI.txt}.
We now mention  other articles generated by this Maple package.

\smallskip\noindent
$\bullet$ If you want to see computer-generated proofs of irrationality of $89$  different constants, and possibly new
irrationality measures for each of them, you are welcome to read \\
\url{http://sites.math.rutgers.edu/~zeilberg/tokhniot/oALLADI2.txt} . \\
They were ``cherry-picked" from the `candidate pool' of $\int_0^1 1/(a+bx+cx^2)\,\d x$ with $1 \leq a,b,c \leq 10$, 
with $\gcd(a,b,c)=1$ that consisted of $841$ `applicants', and naturally we only listed our successes.
Of course, all such constants are already proved to be irrational by heavy-artillery theorems
(for example the Lindemann--Weierstrass theorem), but these theorems do not give explicit bounds for the
irrationality measure, and may be {\it ineffective}.

\smallskip\noindent
$\bullet$ Moving right along, the computer-generated article\\
\url{http://sites.math.rutgers.edu/~zeilberg/tokhniot/oALLADI3.txt} \\
gives irrationality proofs, and irrationality measures, to $43$ constants, for integrals of the form $\int_0^1 1/(a+cx^2) \, \d x$
for relatively prime integer pairs $a,c$ in the range $3 \leq a,c \leq 40$. These are probably subsumed in
previous works of Salikhov and his students.

\smallskip\noindent
$\bullet$ Even more impressive is\\
\url{http://sites.math.rutgers.edu/~zeilberg/tokhniot/oALLADI4.txt} , \\
that like the above Alladi--Robinson theorem is true for `infinitely many' constants, i.e.
it is true for symbolic $a$ (subject to the congruence condition).

This theorem may be not new, but the {\it novelty} is that it was completely {\it computer-generated}. Let us
state the theorem proved in that article.

\begin{theorem}
Let $a$  be a positive integer such that $a  \bmod 4$ is  $3$.
    Then   $\arctan(\sqrt{a})/\sqrt{a}$  is an irrational number, with an irrationality measure at most
$$
\frac{\ln \bigl( -a+\sqrt {a(a+1)}\,\bigr) - \ln \bigl( a+\sqrt {a(a+1)}\,\bigr)}
{\ln \bigl( -a+\sqrt {a(a+1)}\,\bigr)  - \ln \sqrt{a} +1} .
$$
\end{theorem}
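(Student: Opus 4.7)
The plan is to mirror, step by step, the five-step template that produced the Alladi--Robinson theorem in the preceding section. First I would set up a Beukers--Alladi--Robinson-style sequence of definite integrals
$$
I(n)\;:=\;\int_{0}^{1}\!\Bigl(\frac{P(x)}{1+ax^{2}}\Bigr)^{n}\,\frac{\d x}{1+ax^{2}},
$$
with $P(x)\in\mathbb{Z}[x]$ vanishing at the endpoints; the first candidate to try is $P(x)=x(1-x)$, as in the warm-up, falling back to $P(x)=x^{2}(1-x^{2})$ or similar if needed. At $n=0$, $I(0)=\arctan(\sqrt a)/\sqrt a$. Feeding $I(n)$ to the Almkvist--Zeilberger algorithm will produce a linear recurrence with polynomial-in-$n$ coefficients; the leading-in-$n$ (``constant-coefficient'') approximation should have indicial polynomial with roots $\alpha=a+\sqrt{a(a+1)}$ and $\beta=-a+\sqrt{a(a+1)}$ (note $\alpha\beta=a$). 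By Poincar\'e's lemma the two dominant independent rational solutions then satisfy $|A(n)|,|B(n)|=\Omega(\alpha^{n})$ while $I(n)$ is the sub-dominant solution with $|I(n)|=\Omega(\beta^{n})$.

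Second, partial-fraction integration over $\mathbb{Q}(x)$ yields an explicit decomposition
$I(n)=A(n)-B(n)\,\arctan(\sqrt a)/\sqrt a$ with $A(n),B(n)\in\mathbb{Q}$ (the $\sqrt a$ is absorbed into $B(n)$, not carried through the decomposition), and uniqueness forces $A(n),B(n)$ to satisfy the same recurrence as $I(n)$.

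The main obstacle is the \emph{divisibility lemma}: one must control the precise denominators of $A(n)$ and $B(n)$ and show that, after multiplying by $\lcm(1,\dots,n)$ and dividing by an appropriate power of $\sqrt a$, one obtains integers $A'(n),B'(n)$ with $|B'(n)|=\Omega((\alpha e/\sqrt a)^{n})$ and $|I'(n)|=\Omega((\beta e/\sqrt a)^{n})$. This is the step where the congruence $a\equiv 3\pmod 4$ enters substantively: it pins down the $2$-adic (and small-prime) valuations of the partial-fraction coefficients of $(1+ax^{2})^{-n-1}$ so that no extra denominator sneaks in beyond the needed $\sqrt a^{\,n}$ (without such a congruence $1+ax^{2}$ mod small primes can split and spoil the valuation count). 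Following the methodology of the paper, I would first verify the lemma empirically for many $n$ and many admissible $a$, then prove it either by a direct $p$-adic valuation argument or by writing $A(n),B(n)$ as explicit binomial sums and analysing their denominators in closed form.

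Granting the divisibility lemma, everything else is bookkeeping. The Prime Number Theorem gives $\lcm(1,\dots,n)=\Omega(e^{n})$, so $|B'(n)|=\Omega((\alpha e/\sqrt a)^{n})$ and $|I'(n)|=\Omega((\beta e/\sqrt a)^{n})$. Since $\beta e/\sqrt a<1$ for every $a\ge 3$ (and a fortiori for every $a\equiv 3\pmod 4$), the exponent
$\delta = -\log(\beta e/\sqrt a)/\log(\alpha e/\sqrt a)$
is strictly positive, which proves irrationality, and the measure bound $\mu=1+1/\delta$ simplifies algebraically to
$$
\frac{\ln(-a+\sqrt{a(a+1)})-\ln(a+\sqrt{a(a+1)})}{\ln(-a+\sqrt{a(a+1)})-\ln\sqrt{a}+1},
$$
exactly as claimed.
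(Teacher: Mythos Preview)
Your overall plan is exactly the paper's: the computer-generated proof referenced as \texttt{oALLADI4.txt} is produced by the very same \texttt{ALLADI.txt} machinery applied to the quadratic denominator $1+ax^2$, following the five-step Alladi--Robinson template you describe. However, two of your intermediate claims are impossible as written, and they only \emph{appear} to work because the errors cancel in the final formula.

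First, the numbers $\alpha=a+\sqrt{a(a+1)}$ and $\beta=-a+\sqrt{a(a+1)}$ cannot be the roots of the indicial polynomial: their sum $2\sqrt{a(a+1)}$ is irrational, whereas the Almkvist--Zeilberger recurrence has polynomial (hence rational) coefficients, so its leading-term indicial polynomial has rational coefficients. The genuine indicial roots turn out to be $(\sqrt{a+1}\pm\sqrt{a})^2=(2a+1)\pm2\sqrt{a(a+1)}$, with rational sum $4a+2$ and product~$1$; these are precisely $(\alpha/\sqrt a)^2$ and $(\beta/\sqrt a)^2$. Second, ``dividing by an appropriate power of $\sqrt a$'' cannot yield integers, since $a\equiv3\pmod4$ forces $a$ to be a non-square and $\sqrt a$ irrational. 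What actually happens is that the divisibility lemma requires the multiplier $\lcm(1,\dots,2n)=\Omega(e^{2n})$ rather than $\lcm(1,\dots,n)$; the extra factor of $e^n$ plays the role you assigned to $1/\sqrt a^{\,n}$. With the correct roots and the correct multiplier one gets
\[
\mu=\frac{\ln\bigl((2a+1)-2\sqrt{a(a+1)}\bigr)-\ln\bigl((2a+1)+2\sqrt{a(a+1)}\bigr)}{\ln\bigl((2a+1)-2\sqrt{a(a+1)}\bigr)+2}
=\frac{2\ln(\sqrt{a+1}-\sqrt a)}{\ln(\sqrt{a+1}-\sqrt a)+1},
\]
which is algebraically the same expression as in the theorem. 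So your endpoint is right, but the path to it needs these two repairs.
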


\subsection*{The Maple packages \texttt{GAT.txt} and \texttt{BEUKERS.txt}}

The Maple package \texttt{GAT.txt} did not produce (so far) anything exciting, so we don't talk about it here, but
the readers are welcome to explore it on their own.

So far, all our integrals were one-dimensional. The Maple package  \texttt{BEUKERS.txt} tries to tweak Frits Beukers' elegant
proof ]B] of the irrationality of $\zeta(2)$, by trying to see what happens when you look at the double integral
$$
E(n,a) := \int_{0}^{1}\!\!\int_{0}^{1} \left ( \frac{x(1-x)y(1-y)}{1-xy/a} \right )^n \, \frac{\d x\,\d y}{1-xy/a} .
$$

The original case of $a=1$ gave an irrationality proof (and measure) for $\zeta(2)=\pi^2/6$, and indeed $E(0,1)=\zeta(2)$.
The recurrence for $E(n,1)$ is second-order (the same one for which Ap\'ery  proved the irrationality of $\zeta(2)$).
However, things get more complicated for higher $a$.

Since $E(0,a)=a\, \dilog((a-1)/a)$ we hoped that considering the above double integral would yield irrationality proofs for
them. Alas, $E(n,a)$ gets `contaminated' with $\log((a-1)/a)$, and it turns out that
$$
E(n,a)=A(n,a)+ B(n,a) \dilog((a-1)/a)+ C(n,a) \log((a-1)/a) \quad ,
$$
for all $n$, for {\bf three}  sequences  of rational numbers $\{A(n,a)\}$, $\{B(n,a)\}$, $\{C(n,a)\}$. This can be proved directly, but
it follows immediately from the fact that it is true for $n=0,1,2$, and that $E(n,a)$ , and hence $A(n,a),B(n,a),C(n,a)$,  satisfy the third-order
linear recurrence equation
\begin{align*}
&
-{a}^{3} \left( n+1 \right) ^{2} \left( a-1 \right)  \left( 32\,na+76\,a-27\,n-66 \right) \, E(n,a)
\displaybreak[2]\\ &\quad
+ \, {a}^{2} ( 512\,{a}^{3}{n}^{3}+2752\,{a}^{3}{n}^{2}-1072\,{a}^{2}{n}^{3}+4800\,{a}^{3}n-5792\,{a}^{2}{n}^{2}
\\ &\quad\qquad
+636\,a{n}^{3}+2736\,{a}^{3}-10140\,{a}^{2}n
+3456\,a{n}^{2}-81\,{n}^{3}-5796\,{a}^{2}
\\ &\quad\qquad
+6068\,na-441\,{n}^{2}+3472\,a-768\,n-432 ) \, E(n+1,a)
\displaybreak[2]\\ &\quad
+ \, a ( 256\,{a}^{2}{n}^{3}+1632\,{a}^{2}{n}^{2}-120\,a{n}^{3}+3376\,{a}^{2}n-780\,a{n}^{2}
-81\,{n}^{3}
\\ &\quad\qquad
+2232\,{a}^{2}-1670\,na-522\,{n}^{2}-1170\,a-1086\,n-717 ) \, E(n+2,a)
\displaybreak[2]\\ &\quad
\, +  \left( 32\,na+44\,a-27\,n-39 \right)  \left( 3+n \right) ^{2} \, E(n+3,a) \, = \, 0 .
\end{align*}
This complicated recurrence was obtained using the Apagodu--Zeilberger {\bf multivariable Almkvist--Zeilberger algorithm} \cite{AZ06}.

Using this recurrence we proved that for every integer $a \geq 2$,
there exists a {\bf positive} $\delta=\delta(a)$ and three sequences of integers
$C_1(n,a)$, $C_2(n,a)$, $C_3(n,a)$ such that
\begin{align*}
&
|C_1(n,a)+ C_2(n,a)\, \dilog((a-1)/a) + C_3(n,a) \log((a-1)/a)|
\\ &\quad
\leq \frac{\text{CONSTANT}}{\max(|C_1(n,a)|,|C_2(n,a)|,|C_3(n,a)|)^{\delta(a)}} .
\end{align*}
The explicit expression for $\delta(a)$ is complicated and we refer the reader to the computer-generated article\\
\url{http://sites.math.rutgers.edu/~zeilberg/tokhniot/oBEUKERS1.txt} ,\\
that contains a fully rigorous proof of this theorem. If we did not know that $\log((a-1)/a)$ was irrational, this
theorem would have implied that $\dilog((a-1)/a)$ and $\log ((a-1)/a))$ can not be both rational.
It is not enough, by itself, to prove the linear independence of $\{1, \log((a-1)/a),  \dilog((a-1)/a)\}$, but
some human modification of it makes the things work well\,---\,see the latest achievements in this direction, together with historical notes, in the wonderful paper \cite{RV19} of Georges Rhin and Carlo Viola.

\subsection*{The Maple package \texttt{SALIKHOV.txt}}

We are most proud of this last Maple package, since it generated a {\bf new} theorem that
should be of interest to `mainstream', human mathematicians. It was obtained by
generalizing V. Kh. Salikhov's proof \cite{Sa07} of the linear independence of $\{1, \log  2, \log 3\}$.
Our computer proved the following theorem.

\begin{theorem}
For any positive integer $a$, the set of three numbers $\{1, \log(a/(a+1)),\allowbreak \log((a+1)/(a+2))\}$ are  linearly independent.
In other words there exists a positive number, $\nu(a)$, such that
 if $q$,$p_1$,$p_2$ are integers and $Q=max(|q|,|p_1|,|p_2|)$ , $Q \geq Q_0$, where $Q_0$ is a sufficiently large number, then
$$
|q \,+ \, p_1\,\log(a/(a+1)) \, + \, p_2\,\log((a+1)/(a+2))| \, > \, \frac{1}{Q^{\nu(a)}} .
$$
\end{theorem}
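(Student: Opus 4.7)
The plan is to generalize V.\,Kh.~Salikhov's construction for $\{1,\log 2,\log 3\}$ by threading the parameter $a$ through the integrand and automating every non-creative step via the Apagodu--Zeilberger multivariable Almkvist--Zeilberger algorithm. Concretely, I would seek a family of double integrals
$$
J_{j}(n,a) = \int_{0}^{1}\!\!\int_{0}^{1} R_{j,a}(x,y)\,\bigl(x(1-x)y(1-y)K_{a}(x,y)\bigr)^{n}\,\d x\,\d y, \quad j=1,2,3,
$$
where $K_{a}(x,y)$ is a rational kernel of Beukers type whose denominator is linear in $(x,y)$ with coefficients depending on $a$, and $R_{j,a}(x,y)$ is a rational prefactor that varies with $j$. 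The kernel is chosen so that $J_{j}(0,a)$ lands in the $\mathbb{Q}$-span of $\{1,\log(a/(a+1)),\log((a+1)/(a+2))\}$, and the three choices of prefactor produce three \emph{independent} small linear forms in these numbers.

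Second, I would feed each $J_{j}(n,a)$ with $a$ kept symbolic to the multivariable Almkvist--Zeilberger algorithm to obtain a linear recurrence in $n$ with coefficients polynomial in $n$ and $a$. Because of the partial-fraction structure of the kernel, $J_{j}(n,a)$ and (by the recurrence) its rational and logarithmic components decompose as
$$
J_{j}(n,a) = A_{j}(n,a) + B_{j}(n,a)\log\!\frac{a}{a+1} + C_{j}(n,a)\log\!\frac{a+1}{a+2},
$$
with $A_{j},B_{j},C_{j}\in\mathbb{Q}(a)$. The triples $(A_{j},B_{j},C_{j})$ satisfy the same recurrence, so the Poincar\'e lemma applied to its indicial polynomial simultaneously gives the coefficient growth rate $M(a)^{n}$ and the decay rate $\rho(a)^{n}$ of the subdominant solution realized by the integral itself.

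Third, I would promote the rationals to integers by establishing the divisibility lemma that $\lcm(1,\ldots,n)^{2}$ clears the denominators of $A_{j},B_{j},C_{j}$ (the exponent $2$ being dictated by the double integration). By the Prime Number Theorem this costs only a factor of $e^{2n}$ in height, yielding three sequences of integer linear forms $L_{j}(n,a)=q_{j}+p_{j,1}\log(a/(a+1))+p_{j,2}\log((a+1)/(a+2))$ with $|L_{j}|=\Omega((\rho(a)e^{2})^{n})$ and coefficient heights $\Omega((M(a)e^{2})^{n})$. Provided $\rho(a)e^{2}<1$, Nesterenko's quantitative linear independence criterion, whose $3\times 3$ nonvanishing-determinant hypothesis is exactly the requirement that $L_{1},L_{2},L_{3}$ be linearly independent as vectors in $\mathbb{Z}^{3}$, then converts this data into an explicit $\nu(a)$ of the shape $\nu(a)=-2\log(M(a)e^{2})/\log(\rho(a)e^{2})-1$.

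The main obstacle, as always in this game, is the divisibility lemma with $a$ symbolic: showing that a specific sum of shifted binomial/Pochhammer expressions has denominator dividing $\lcm(1,\ldots,n)^{2}$ is a $p$-adic statement that a computer can verify term-by-term in $n$ but which, uniformly in $a$, usually requires a human argument of Chudnovsky--Rukhadze type separating valuations at primes below and above~$n$. A secondary hurdle is establishing \emph{uniform} non-degeneracy of the $3\times 3$ coefficient matrix for every $a\geq 1$ rather than only on the sampled range; if $\det(A_{j},B_{j},C_{j})$ factors as an explicit polynomial in $a$ and $n$ extracted from the recurrence, this can be settled symbolically, otherwise a separate non-vanishing argument is needed. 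Once both of these are in hand, the rest of the proof is mechanical and, as advertised, can be handed off to the package \texttt{SALIKHOV.txt}.
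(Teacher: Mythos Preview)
Your proposal takes a genuinely different route from the paper, and the difference matters. Salikhov's construction for $\{1,\log 2,\log 3\}$ is \emph{not} a Beukers-type double integral; it is a one-dimensional integral of a fixed rational function, integrated over \emph{two different real intervals}. The paper follows exactly this pattern: with $c_1=2a+1$ and $c_2=2a+3$ it takes
\[
E_j(n)=\int_0^{c_j}\biggl(\frac{x^2(x^2-c_1^2)(x^2-c_2^2)}{(x^2-c_1^2c_2^2)^2}\biggr)^{\!n}\frac{\d x}{x^2-c_1^2c_2^2},\qquad j=1,2,
\]
so that $E_1(n)=A_1(n)+B(n)\log\frac{a+1}{a+2}$ and $E_2(n)=A_2(n)+B(n)\log\frac{a}{a+1}$, each containing a \emph{single} logarithm and sharing the same $B(n)$. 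The common integrand forces a common third-order recurrence (via the univariate Almkvist--Zeilberger algorithm, not the multivariable one), and the shared $B(n)$ makes the linear-independence step straightforward without a $3\times3$ Nesterenko determinant. The integrality factor is not $\lcm(1,\dots,n)^2$ but essentially $K(a)^n$ with $K(a)=a(a+2)$ (halved for even $a$), reflecting the denominator structure of the one-dimensional partial fractions.

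The gap in your plan is the integral itself. A Beukers kernel of the shape $1/(1-xy/c)$ produces dilogarithms; indeed the paper's \texttt{BEUKERS.txt} section records precisely this contamination, getting forms in $1$, $\dilog((a-1)/a)$, $\log((a-1)/a)$ rather than in two distinct logarithms. You have not exhibited any kernel $K_a(x,y)$ whose double integral lands in $\mathbb{Q}+\mathbb{Q}\log\frac{a}{a+1}+\mathbb{Q}\log\frac{a+1}{a+2}$, and I do not see a natural candidate: factored kernels split the integral, while Beukers-type kernels bring in $\operatorname{Li}_2$. The decisive idea you are missing is that one gets the two target logarithms not by varying the integrand but by varying the \emph{contour} of a single one-variable integral whose pole structure simultaneously encodes $a$, $a+1$, and $a+2$.
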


The full proof is in the following article\\
\url{http://sites.math.rutgers.edu/~zeilberg/tokhniot/oSALIKHOV2.txt} , \\
where an exact expression for $\nu(a)$ can be found (see also below).
The theorem was previously known to be true for $a\ge53$ by Masayoshi Hata \cite{Ha98} 
(recently improved, though somewhat implicitly, by Volodya Lysov to $a\ge32$).

Because of the novelty, we choose this result to feature some human-generated highlights of the proof.
The integrals
$$
E_1(n)=\int_0^{2a+1}\biggl(\frac{x^2(x^2-(2a+1)^2)(x^2-(2a+3)^2)}{(x^2-(2a+1)^2(2a+3)^2)^2}\biggr)^n\frac{\d x}{x^2-(2a+1)^2(2a+3)^2}
$$
and
$$
E_2(n)=\int_0^{2a+3}\biggl(\frac{x^2(x^2-(2a+1)^2)(x^2-(2a+3)^2)}{(x^2-(2a+1)^2(2a+3)^2)^2}\biggr)^n\frac{\d x}{x^2-(2a+1)^2(2a+3)^2}
$$
are generalizations of integrals in Salikhov's article \cite{Sa07} , and we have
$$
E_1(n)=A_1(n)+B(n)\log\frac{a+1}{a+2}
\quad\hbox{and}\quad
E_2(n)=A_2(n)+B(n)\log\frac a{a+1},
$$
where all $E_1(n)$, $E_2(n)$, $A_1(n)$, $A_2(n)$, $B(n)$ satisfy the same third order linear recurrence equation
with polynomial coefficients, 
the indicial polynomial of whose `constant-coefficients recurrence approximation' is
\begin{align*}
&
1+(4a^4+16a^3-11a^2-54a-34)N
\\ &\quad
-(108a^6+648a^5+1440a^4+1440a^3+614a^2+76a-1)N^2+a(a+2)N^3  .
\end{align*}
This polynomial has three real zeroes $C_1(a)$, $C_2(a)$, $C_3(a)$ for $a\ge1$ located as follows:
$$
-\frac1{4a^2(a+2)^2} < C_3(a) < 0 < C_2(a) < \frac1{27a(a+2)} < 108a^2(a+1)^2 < C_1(a) ;
$$
also $C_2(a)>\frac1{4a^2(a+2)^2}>|C_3(a)|$ for $a\ge2$.
Furthermore, choosing $K(a)=a(a+2)$ if $a$ is odd, and $K(a)=(a/2)(a/2+1)$ if $a$ is even, we get numbers $K(a)^nA_1(n)$, $K(a)^nA_2(n)$, and $K(a)^nB(n)$ integral. Then
$$
\nu(a) \le -\frac{\log C_1(a)+\log K(a)+2}{\log C_2(a)+\log K(a)+2}
$$
for $a\ge2$, and the same formula for $a=1$ with $\log C_2(a)$ replaced with $\log|C_3(a)|$.
The upper bound for $\nu(a)$ is asymptotically $\frac{3\log a(a+2)}{\log27-2}$ if $a$~is odd and $\frac{3\log a(a+2)}{\log108-2}$ if $a$~is even, as $a\to\infty$.

\subsection*{Conclusion}
Humans, no matter how smart, can only go so far. Machines, no matter how fast, can also
only go so far. The future of mathematics depends on a fruitful {\it symbiosis} of the strong
points of both species, as we hoped we demonstrated in this modest tribute to Bruce Berndt.



\begin{thebibliography}{99}

\bibitem{AR80}
\textsc{K. Alladi} and \textsc{M.\,L. Robinson},
Legendre polynomials and irrationality,
\emph{J. Reine Angew. Math.} \textbf{318} (1980), 137--155.

\bibitem{AB88}
\textsc{G. Almkvist} and \textsc{B. Berndt},
Gauss, Landen, Ramanujan, the arithmetic-geometric mean, ellipses, pi and the Ladies Diary,
\emph{Amer. Math. Monthly} \textbf{95} (1988), 585--608.

\bibitem{AZ90}
\textsc{G. Almkvist} and \textsc{D. Zeilberger},
The method of differentiating under the integral sign,
\emph{J. Symbolic Computation} \textbf{10} (1990), 571--591;\\
\url{http://www.math.rutgers.edu/~zeilberg/mamarimY/duis.pdf}.
 
\bibitem{AZ06}
\textsc{M. Apagodu} and \textsc{D. Zeilberger},
Multi-Variable Zeilberger and Almkvist--Zeilberger Algorithms and the Sharpening of Wilf--Zeilberger Theory,
\emph{Adv. Appl. Math.} \textbf{37} (2006), 139--152;
\url{http://sites.math.rutgers.edu/~zeilberg/mamarim/mamarimhtml/multiZ.html}.

\bibitem{Be79}
\textsc{F. Beukers},
A note on the irrationality of $\zeta(2)$ and $\zeta(3)$,
\emph{Bull. London Math. Soc.} \textbf{11} (1979), no.~3, 268--272.

\bibitem{Ha98}
\textsc{M. Hata},
The irrationality of $\log(1 + 1/q) \log(1 - 1/q)$,
\emph{Trans. Amer. Math. Soc.} \textbf{350} (1998), 2311--2327.

\bibitem{Ma09}
\textsc{R. Marcovecchio},
The Rhin--Viola method for $\log2$,
\emph{Acta Arith.} \textbf{139} (2009), 147--184.

\bibitem{PWZ96}
\textsc{M. Petkovsek}, \textsc{H.\,S. Wilf}, and \textsc{D. Zeilberger},
$A=B$ (A.K. Peters, 1996); freely available from \url{https://www.math.upenn.edu/\~wilf/AeqB.html}.

\bibitem{vdP79}
\textsc{A. van der Poorten},
A proof that Euler  missed\dots\ Ap\'ery's proof of the irrationality of $\zeta(3)$, An informal report, 
\emph{Math. Intelligencer} \textbf{1} (1979), 195--203;\\
\url{http://www.ega-math.narod.ru/Apery1.htm}.

\bibitem{Ru87}
\textsc{E.\,A. Rukhadze},
A lower bound for the approximation of $\ln2$ by rational numbers,
\emph{Vestnik Moskov. Univ. Ser.~I Mat. Mekh.} [\emph{Moscow Univ. Math. Bull.}] \textbf{42} (1987), no.~6, 25--29 (Russian).

\bibitem{Sa07}
\textsc{V.\,Kh. Salikhov},
On the irrationality measure of $\ln 3$,
\emph{Doklady Math.} \textbf{76} (2007), 955--957.

\bibitem{RV19}
\textsc{G. Rhin} and \textsc{C. Viola},
Linear independence of $1$, $\operatorname{Li}_1$ and $\operatorname{Li}_2$,
\emph{Moscow J. Combin. Number Theory} \textbf{8} (2019), no.~1, 81--96.

\bibitem{Wei}
\textsc{E.\,W. Weisstein}, 
\emph{Irrationality Measure},
from MathWorld\,---\,A Wolfram Web Resource;\\
\url{http://mathworld.wolfram.com/IrrationalityMeasure.html}.

\bibitem{Zu04}
\textsc{W. Zudilin},
An essay on irrationality measures of $\pi$ and other logarithms,
\emph{Chebyshevskii Sbornik} \textbf{5} (2004), Tula State Pedagogical University, 49--65 (Russian);
English version: \url{http://arxiv.org/abs/math/0404523}.

\end{thebibliography}
\end{document}